\DeclareMathOperator{\lcm}{lcm}
\DeclareMathOperator{\os}{OS}
\DeclareMathOperator{\so}{SO}
\DeclareMathOperator{\sos}{SOS}
\DeclareMathOperator{\V}{V}
\DeclareMathOperator{\per}{per}
\newcommand{\w}{\omega}
\newcommand{\G}{\Gamma}
\newcommand{\N}{\mathbbm{N}}
\newcommand{\Z}{\mathbbm{Z}}
\newcommand{\Q}{\mathbbm{Q}}
\newcommand{\R}{\mathbbm{R}}
\newcommand{\scal}[2]{\operatorname{scal}_{#1}(#2)}
\newcommand{\Scal}[2]{\operatorname{Scal}_{#1}(#2)}
\newcommand{\den}[2]{\operatorname{den}_{#1}(#2)}
\theoremstyle{definition} \newtheorem{thm}{Theorem}[section]
\theoremstyle{definition} 
\theoremstyle{definition} \newtheorem{lem}[thm]{Lemma}
\theoremstyle{definition} \newtheorem{cor}[thm]{Corollary}
\theoremstyle{definition} \newtheorem{prop}[thm]{Proposition}
\theoremstyle{definition} 
\theoremstyle{definition} \newtheorem{ex}[thm]{Example}
\begin{document}
	\author{Jeanine Concepcion H.~Arias and Manuel Joseph C.~Loquias}
	\address{Institute of Mathematics, University of the Philippines Diliman}
	\email[J.C.H.~Arias]{jcarias@math.upd.edu.ph}
	\email[M.J.C.~Loquias]{mjcloquias@math.upd.edu.ph}
	\title{Similarity Isometries of Point Packings}
	
	\subjclass[2010]{Primary 52C07; Secondary 52C05, 11H06, 82D25}
	\keywords{similarity isometry, similar sublattice, point packing, hexagonal packing}
	
	\begin{abstract}
		A linear isometry $R$ of $\R^d$ is called a similarity isometry of a lattice $\Gamma \subseteq \R^d$ 
		if there exists a positive real number $\beta$ such that $\beta R\Gamma$ is a sublattice of (finite index in)~$\Gamma$.
		The set $\beta R\Gamma$ is referred to as a similar sublattice of $\Gamma$.
		A (crystallographic) point packing generated by a lattice $\Gamma$ is a union of $\Gamma$ with finitely many shifted copies of~$\Gamma$. 
		In this study, the notion of similarity isometries is extended to point packings.  
		We provide a characterization for the similarity isometries of point packings and identify the corresponding similar subpackings.  
		Planar examples will be discussed, namely, the $1 \times 2$ rectangular lattice and the hexagonal packing (or honeycomb lattice).
		Finally, we also consider similarity isometries of point packings about points different from the origin by studying similarity isometries of shifted point packings. 
		In particular, similarity isometries of a certain shifted hexagonal packing will be computed and compared with that of the hexagonal packing.
	\end{abstract}
		
	\maketitle
	
	\section{Introduction}
		A linear isometry $R$ of $\R^d$ is called a \emph{similarity isometry} of a lattice $\Gamma \subseteq \R^d$ if 
		there exists a positive real number $\beta$ such that $\beta R\Gamma$ is a sublattice of (finite index in) $\Gamma$.
		The set $\beta R\Gamma$ is referred to as a \emph{similar sublattice} of $\Gamma$.
		Similarity isometries of lattices may be viewed as a generalization of coincidence isometries of lattices: in the latter, 
		the lattice is only rotated while in the former, a uniform scaling factor is further applied to the lattice.	
		Coincidence site lattices and coincidence isometries have been used by crystallographers to geometrically describe 
		the interfaces where crystals and quasicrystals of different orientations meet (called grain boundaries)~\cite{F11,B70}.
		On the other hand, similar sublattices and similarity isometries arise for instance in color symmetries of crystals and quasicrystals~\cite{BG04},
		and in structures that are self-similar (e.g.: fractals and tilings with singularities).
		Both similar and coincidence sublattices have been used in the design of multiple description lattice vector quantizers~\cite{AS10b,AS10,SAV15}.
		From a theoretical standpoint, the group of coincidence isometries and the group of similarity isometries are intimately related~\cite{G11,GB08,Z14}.
		These topics belong to a wider class of combinatorial problems for lattices and $\Z$-modules (for a recent survey, see~\cite{BZ17}).	
		
		A (\emph{crystallographic}) \emph{point packing} generated by a lattice $\Gamma$ is a union of $\Gamma$ with finitely many shifted copies of $\Gamma$. 
		Point packings are precisely the locally finite point sets whose translation group forms a lattice of full rank~\cite{BG13}.
		In the context of the sphere packing problem, they appear as non-lattice periodic packings~\cite{CSB01}.
		In crystallography, they serve as models of ideal crystals, that is, crystals having multiple atoms per primitive unit cell.
		Examples include the hexagonal packing or honeycomb lattice, diamond lattice (crystal structure of diamond, tin, silicon, and germanium), 
		and hexagonal close packing (crystal structure of quartz).  
		
		Point packings appear in the literature under different names.
		For instance, Dolbilin et~al.~referred to point packings in~\cite{DLS98} as ideal or perfect crystals, 
		and gave minimal sufficient geometric conditions on a discrete subset of $\R^d$ to be an ideal crystal.
		Schymura and Yuan used the term discrete lattice-periodic sets for point packings, and they considered the homogeneous and inhomogeneous problem for such sets in~\cite{SY19}.
		The term multilattice has also been used to pertain to a point packing, and arithmetic classifications of multilattices have been studied in~\cite{PZ03,I13}.
		
		In this study, we extend the notion of similarity isometries to point packings.  
		Theorem~\ref{component2} provides a characterization for the similarity isometries of point packings. 
		We also establish in Appendix~\ref{app} that the set of similarity isometries of a point packing forms a monoid under certain conditions.
		Planar examples will be discussed, namely, the $1 \times 2$ rectangular lattice and the hexagonal packing viewed as point packings. 
		Finally, we also examine similarity isometries of point packings about points different from the origin.
		To this end, we consider (linear) similarity isometries of a translated copy of a point packing, or what we call a \emph{shifted point packing}.
		This is because rotating a point packing about the point $-x\in\R^d$ is equivalent to rotating the translate of the point packing by $x$ about the origin. 
		To illustrate this, we examine the similarity isometries of a certain shifted hexagonal packing and compare them with the similarity isometries of the hexagonal packing.	
	
	\section{Preliminaries}
		A discrete subset $\Gamma$ of $\R^d$ is a \emph{lattice} if it is the $\Z$-span of $d$ linearly independent vectors in $\R^d$ over $\R$.  
		As a group, $\Gamma$ is isomorphic to the free abelian group of rank~$d$.  
		A \emph{sublattice} $\Gamma'$ of $\Gamma$ is a subgroup of $\Gamma$ of full rank, that is, with finite index in $\G$. 
		Geometrically, the index of $\G'$ in $\G$ may be viewed as the quotient of the volumes of the fundamental domains of $\G'$ and $\G$.
		A \emph{period} of $\G$ is an element $t \in \R^d$ for which $t+\G = \G$. 
		The \emph{set of periods} of $\G$, denoted by $\per(\G)$, is given by $\per(\G) = \{ t \in \R^d : t + \G = \G\}$.
		
		A linear isometry $R$ of $\R^d$ is called a \emph{similarity isometry} of $\G$ if there exists $\beta \in \R^+$ for which $\beta R\G$ is a sublattice of $\G$.
		The sublattice $\beta R\G$ is referred to as a \emph{similar sublattice} of $\G$.
		The set of similarity isometries of $\G$ forms a group and is denoted by $\os(\G)$~\cite{BGHZ08}.
		We write $\os(\G)= \{ R \in \operatorname{O}(d,\R) : \beta R\G \subseteq \G \text{ for some } \beta \in \R^+ \}$.
		Similarly, the set of similarity rotations of $\G$ is denoted by $\sos(\G) = \os(\G) \cap \so(d)$ and is a subgroup of $\os(\G)$.
		Existence of similar sublattices as well as properties of similarity isometries for particular lattices have been well-studied (see~\cite{CRS99,BM98,H08,BHM08,BSZ11}).
		
		Given a similarity isometry $R$ of $\G$, the \emph{set of scaling factors} $\Scal{\G}{R}$ of $R$ with respect to $\G$ is given by 
		$\Scal{\G}{R} = \{ \beta \in \R : \beta R\G \subseteq \G \}$. 		
		Clearly, $R \in \os(\G)$ if and only if $\Scal{\G}{R} \neq \{0\}$.
		The smallest positive element of $\Scal{\G}{R}$ is called the \emph{denominator} of $R$ with respect to $\G$, denoted by $\den{\G}{R}$. 
		From~\cite{BZ17}, we have 
		$\Scal{\G}{R} = \den{\G}{R}\, \Z = \{ k \cdot \den{\G}{R} : k \in \Z \}$ (see also~\cite{GB08,G11}).
		In addition, for any $R, S \in \os(\G)$, we have $\Scal{\G}{R} \Scal{\G}{S}\subseteq\Scal{\G}{RS}$. 
		
		It is easy to verify that $R\in\os(\G)$ if and only if there exists $\alpha \in \R^+$ for which $\G \cap \alpha R\G$ is a sublattice of $\G$ and $\alpha R\G$. 
		In fact, $\G \cap \alpha R\G$ is a sublattice of $\G$ and $\alpha R\G$ whenever
		$\G \cap \alpha R\G$ is a sublattice of either $\G$ or $\alpha R\G$~\cite{BZ17}. 
		We also define the set of scaling factors $\scal{\G}{R}$ of a similarity isometry $R$ of $\G$ given by
		$\scal{\G}{R} = \{ \alpha \in \R : \G \cap \alpha R\G \subseteq \G \}$.
		Here, $\scal{\G}{R} = \den{\G}{R}\, \Q^{\ast} = \{ k \cdot \den{\G}{R} : k \in \Q^{\ast} \}$, and so $\Scal{\G}{R} \subseteq \scal{\G}{R} \cup \{0\}.$
	
		\begin{ex}\label{ex1}
			Let $\G$ be the square lattice $\Z^2$, which we associate with the ring of Gaussian integers $\Z[i] = \{ a+bi : a,b \in \Z \}$ to simplify computations.
			Take $\beta = \sqrt{5}$ and $R$ to be the rotation about the origin by $\tan^{-1}(2)$ with rotation matrix 
			$\frac{1}{\sqrt{5}}
			\left(\begin{smallmatrix}
				1 & -2 \\
				2 & \ 1
			\end{smallmatrix}\right)$. 
			In the setting of $\Z[i]$, the rotation $R$ corresponds to multiplication by the complex number $(1+2i)/\sqrt{5}$. 
			Then $R$ is a similarity isometry of $\G$ with $\den{\G}{R} = \sqrt{5}$ and $\Scal{\G}{R} = \sqrt{5}\, \Z$. 
			Observe in Figure~\ref{figure1} that $\beta R\G \subseteq \G$.			
			\begin{figure}[!ht]
				\centering
				\includegraphics[width=5 cm]{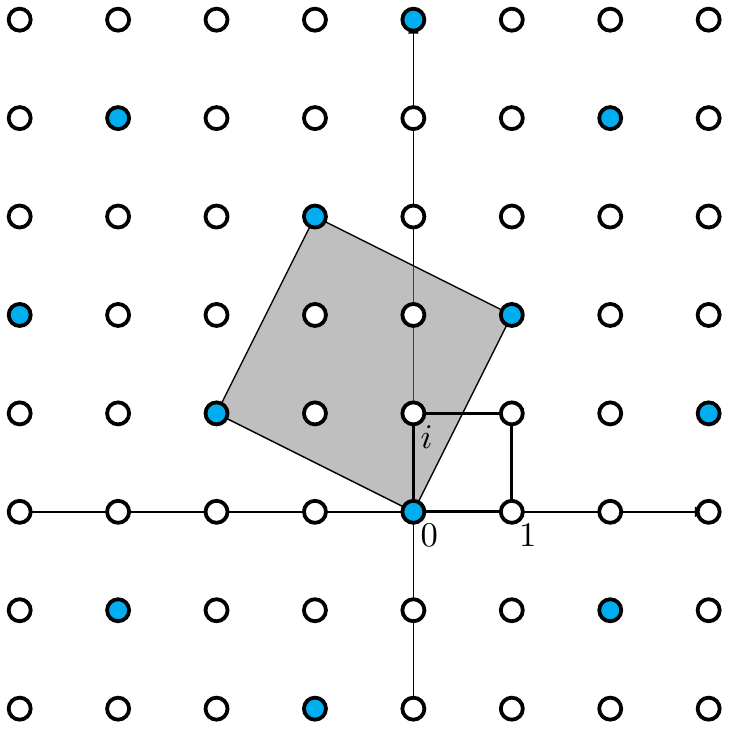}
				\caption{Let $\G$ be the square lattice $\Z[i]$, $\beta = \sqrt{5}$, and $R$ the rotation about the origin by $\tan^{-1}(2)$. 
				The lattice $\beta R\G$ (blue dots) is a similar sublattice of $\G$ that is obtained by applying $R$ to $\G$ (white dots), followed by a scaling factor of $\beta$. }	
				\label{figure1}		
			\end{figure}
		\end{ex}
	
		A subset $L$ of $\R^d$ is said to be a (\emph{crystallographic}) \emph{point packing} generated by the lattice $\G \subseteq \R^d$ 
		if $L$ is the union of $\G$ and a finite number of translated copies of $\G$, that is, 
		$L = \bigcup_{k=0}^{m-1} (x_k + \G)$ where $m \in \N$, $x_0 = 0$, $x_k \in \R^d$ for $1 \leq k \leq m-1$, 
		and $x_{k_1}-x_{k_2} \notin \G$ whenever $k_1 \neq k_2$ (so that the sets $x_k+\G$ are distinct).
		We refer to the lattice $\G$ as the \emph{generating lattice} of $L$.
		The shifted lattices $x_k+\G$ comprising $L$ are called \emph{components} of $L$, and the vector $x_k$ is called a \emph{shift vector} of $L$.
		The set of shift vectors of $L$ will be denoted by $V_L := \{ x_0,x_1,\ldots,x_{m-1}\}$.
		The \emph{set of periods} of $L$, denoted by $\per(L)$, is given by $\per(L) = \{ t \in \R^d : t + L = L\}$.
		
		In general, point packings are not lattices.
		Note also that for a component $x_k+\G$ of~$L$, we can take $x_k'=x_k+\ell$ with $\ell \in \G$ to be another shift vector for $x_k+\G$.
		Thus, without loss of generality, we will choose the shift vectors so that all of them lie within a fundamental domain of $\G$.
		Nonetheless, even with such a choice of shift vectors, the decomposition of a point packing $L$ into a finite union of shifted lattices is not unique.
		This is illustrated by the following example.
	
		\begin{ex} \label{ex-compact}
			Let $L = \G \cup (\frac{1+i}{2}+\G)$ be the point packing generated by the square lattice $\G = \Z[i]$, as shown in Figure~\ref{compact-square}.
			As a point packing generated by $\G$, $L$ has two components, namely, $\G$ and $\frac{1+i}{2}+\G$.
			However, $L$ is also a square lattice obtained by rotating $\G$ about the origin by $\frac{\pi}{4}$ followed by a scaling factor of $\frac{\sqrt{2}}{2}$.
			Writing $L = \G'$, $L$ may now be viewed as a point packing generated by $\G'$, having  only one component.
			Note that $\G \subsetneq \G'$ and $\per(\G) \subsetneq \per(\G')$, since the shift vector $\frac{1+i}{2}$ is contained in $\per(\G')$ but not in $\per(\G)$. 
			\begin{figure}[!ht]
				\centering
				\includegraphics[width=4 cm]{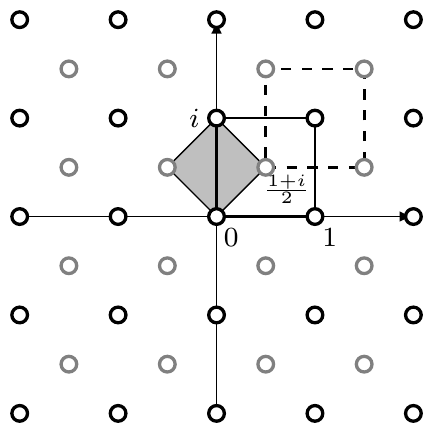}
				\caption{Let $L$ be the union of the square lattice $\G = \Z[i]$ (black dots) and its shifted copy $\frac{1+i}{2}+\G$ (gray dots).
				Here, $L$ is also the square lattice obtained by rotating $\G$ about the origin by $\frac{\pi}{4}$ followed by a scaling factor of $\frac{\sqrt{2}}{2}$. }
				\label{compact-square}
			\end{figure}
		\end{ex}
	
		In general, given a point packing $L$ generated by a lattice $\G$, there exists a generating lattice $\G'$ containing $\G$ such that $\per(\G') = \per(L)$ (\cite{P04}, 
		an alternative proof is given in \cite{EFL}).
		Consequently, when $L$ is generated by such a lattice $\G'$, then $L$ is written with the least number of components.
		Nevertheless, the succeeding results still hold even if $L$ is not expressed in terms of the maximal generating lattice~$\G'$.
		Indeed, in Example~\ref{ex-component} and in Section 4.1, 
		we view the decomposition of a lattice $L$ into cosets of a (proper) sublattice $\G$
		as a point packing generated by $\Gamma$ (with more than one component).
	
	\section{Similarity Isometries of Crystallographic Point Packings}\label{simpointpack}
	
		We now proceed to generalize the notion of a similarity isometry to point packings.
		A point packing $L'$ will be called a \emph{subpacking} of $L$ if $L' \subseteq L$ such that $L'$ has finite index in $L$. 
		By index of $L'$ in $L$, we mean the ratio of the density of points in $L$ by the density of points in $L'$, or equivalently, 
		the ratio of the number of points of $L$ by the number of points of $L'$ in a unit cell of the generating lattice of $L'$.
		Here, we assume that the generating lattices of $L$ and $L'$ have the same rank, and hence the index of $L'$ in $L$ is finite.
		
		A linear isometry $R$ is a \emph{similarity isometry} of $L$ if there exists $\beta \in \R^+$ such that $\beta RL$ is a subpacking of $L$.
		Here, $\beta RL$ is called a \emph{similar subpacking} of $L$.
		The set of similarity isometries of $L$ is denoted by 
		$\os(L) = \{ R \in \operatorname{O}(d,\R) : \beta RL \subseteq L \text{ for some } \beta \in \R^+ \}$.
		The set of similarity rotations of $L$ will be denoted by $\sos(L) := \os(L) \cap \so(d)$.
		Given a similarity isometry $R$ of a point packing $L$, we define the \emph{set of scaling factors} $\Scal{L}{R}$ of $R$ with respect to $L$ 
		by $\Scal{L}{R} = \{ \beta \in \R : \beta RL \subseteq L \}$.
		The smallest positive element of $\Scal{L}{R}$ will be called the \emph{denominator} of $R$ with respect to $L$.
		Observe that $0$ is always in $\Scal{L}{R}$, and we have $R \in \os(L)$ if and only if $\Scal{L}{R} \neq \{0\}$.
		
		We start by describing the intersection of a component of the similar subpacking $\beta RL$ with a component of $L$.
		The following lemma states that such an intersection is either empty or is a shifted copy of $\G \cap \beta R\G$.		
		
		\begin{lem} \label{component1-lem}
		Let $\G$ be a lattice in $\R^d$, $x_j, x_k \in \R^d$, $R \in \operatorname{O}(d, \R)$, and $\beta \in \R^+$.
		\begin{enumerate}[1.]
			\item Then $\beta R(x_k+\G) \cap (x_j + \G)\neq\varnothing$ if and only if $\beta Rx_k-x_j\in\G+\beta R\G$.
		
			\item If $\beta R(x_k+\G) \cap (x_j + \G)\neq\varnothing$, then 
				\begin{alignat}{2}
					\label{eq1}\beta R(x_k+\G) \cap (x_j + \G) &= x_j+\ell+ (\G \cap \beta R\G)\\
					\label{eq2}&= \beta Rx_k + \beta R\ell' + (\G \cap \beta R\G)
				\end{alignat}
				for some $\ell, \ell' \in \G$. 
			\end{enumerate}
		\end{lem}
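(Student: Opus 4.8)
The plan is to treat both parts via the elementary fact that, in an abelian group, the intersection of a coset of one subgroup with a coset of another is either empty or a single coset of the intersection of the two subgroups. Here the ambient abelian group is $(\R^d,+)$, the two subgroups are $\G$ and $\beta R\G$ (the latter is again a lattice, hence a subgroup, since $R$ is linear and $\beta$ is a nonzero scalar), and the two cosets are $x_j+\G$ and $\beta R(x_k+\G)=\beta Rx_k+\beta R\G$.

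For Part~1, I would simply unwind the definition of nonemptiness. The intersection is nonempty precisely when there exist $\gamma_1,\gamma_2\in\G$ with $\beta R(x_k+\gamma_1)=x_j+\gamma_2$. Rearranging gives $\beta Rx_k-x_j=\gamma_2-\beta R\gamma_1$; since $\gamma_2\in\G$ and $-\beta R\gamma_1=\beta R(-\gamma_1)\in\beta R\G$, the right-hand side lies in $\G+\beta R\G$, which proves the forward direction. For the converse, writing $\beta Rx_k-x_j=\gamma+\beta R\gamma'$ with $\gamma,\gamma'\in\G$ and rearranging to $\beta R(x_k-\gamma')=x_j+\gamma$ exhibits an explicit common point, so the intersection is nonempty.

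For Part~2, I would fix any point $p$ in the (nonempty) intersection. Since $p\in x_j+\G$ there is $\ell\in\G$ with $p=x_j+\ell$, and since $p\in\beta Rx_k+\beta R\G$ there is $\ell'\in\G$ with $p=\beta Rx_k+\beta R\ell'$. The coset-intersection fact then gives $\beta R(x_k+\G)\cap(x_j+\G)=p+(\G\cap\beta R\G)$: indeed, for any other point $q$ in the intersection, $q-p$ lies in $\G$ (as $p,q\in x_j+\G$) and in $\beta R\G$ (as $p,q\in\beta Rx_k+\beta R\G$), hence in $\G\cap\beta R\G$; conversely $p+(\G\cap\beta R\G)$ is visibly contained in both cosets. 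Substituting the two expressions for $p$ then yields \eqref{eq1} and \eqref{eq2}, respectively.

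There is no serious obstacle here; the content is just the standard coset-intersection lemma applied to the pair of subgroups $\G$ and $\beta R\G$. The only points requiring care are purely bookkeeping: verifying that $\beta R\G$ is genuinely a subgroup so that $-\beta R\gamma_1\in\beta R\G$, and keeping the two distinct subgroups $\G$ and $\beta R\G$ straight when reading off the two representatives $x_j+\ell$ and $\beta Rx_k+\beta R\ell'$ of the common coset in \eqref{eq1} and \eqref{eq2}.
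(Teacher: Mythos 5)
Your proof is correct and takes essentially the same approach as the paper's: both reduce the lemma to the standard fact that a nonempty intersection of cosets of the subgroups $\G$ and $\beta R\G$ is a single coset of $\G \cap \beta R\G$. The only cosmetic difference is that you pick a common point $p$ and read off both representatives at once, whereas the paper first translates by $x_j$, computes $(\ell+\beta R\G)\cap\G=\ell+(\G\cap\beta R\G)$, and treats the second displayed equality as analogous.
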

		
		\begin{proof}
			The proof of item 1 is straightforward.
			
			For item 2, first note that $\beta R(x_k+\G) \cap (x_j + \G) = x_j + [(\beta Rx_k-x_j+\beta R\G)\cap\G]$. 
			Since $\beta R(x_k+\G) \cap (x_j + \G)$ is non-empty, 
			there exist $\ell, \tilde\ell \in \G$ such that $\beta Rx_k-x_j = \ell - \beta R\tilde\ell$.  
			Thus, $(\beta Rx_k-x_j+\beta R\G)\cap\G = (\ell+\beta R\G) \cap \G = \ell + (\G\cap\beta R\G)$. 
			This proves~\eqref{eq1}.
			Equation~\eqref{eq2} can be shown in a similar fashion.
		\end{proof}
		
		The following theorem is a characterization of similarity isometries of a point packing $L$. 
		Essentially, it tells us that $\beta RL$ is a similar subpacking of $L$ if and only if
		every component of $\beta RL$ intersects a fixed number of components of $L$.
		
		\begin{thm} \label{component2}
			Let $L = \bigcup_{k=0}^{m-1} (x_k + \G)$ be a point packing generated by a lattice $\G \subseteq \R^d$, $R \in \operatorname{O}(d, \R)$, and $\beta \in \R^+$. 
			Then $R\in\os(L)$ and $\beta\in\Scal{L}{R}$ if and only if 
			$R \in \os(\G)$ and $\beta \in \scal{\G}{R}$ such that 
			for every $k \in \{0,1,\ldots,m-1\}$, there exist $n$~distinct elements $j_1, \ldots, j_n \in \{0,1,\ldots,m-1\}$, where $n=[\beta R\G : (\G \cap \beta R\G)]$,
			for which $\beta R x_k - x_{j_i} \in \G + \beta R\G$ for all $i \in \{1,\ldots,n\}$.
		\end{thm}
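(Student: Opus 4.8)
The plan is to reduce the containment $\beta R L \subseteq L$ to a statement about how each transformed component $\beta R(x_k+\G)$ sits inside the finite union of $\G$-cosets making up $L$, and to use Lemma~\ref{component1-lem} to translate ``$\beta R(x_k+\G)$ meets the component $x_j+\G$'' into the membership condition $\beta R x_k - x_j \in \G + \beta R\G$. Observe first that $\beta R L = \bigcup_{k}\bigl(\beta R x_k + \beta R\G\bigr)$, so $\beta R L \subseteq L$ holds if and only if $\beta R(x_k+\G) \subseteq L$ for every $k$. I would handle both implications through one structural decomposition: since $\beta R(x_k+\G)$ is a single coset of $\beta R\G$, it breaks into exactly $n=[\beta R\G:(\G\cap\beta R\G)]$ cosets of $\G\cap\beta R\G$, and the crucial observation is that two such subcosets lie in the same $\G$-coset precisely when they coincide (their difference lies in $\beta R\G \cap \G = \G\cap\beta R\G$). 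Hence the $n$ subcosets occupy $n$ \emph{pairwise distinct} $\G$-cosets, and each subcoset, being a translate of $\G\cap\beta R\G\subseteq\G$, lies entirely inside or entirely outside any given component.

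For the forward direction, I would first extract $R\in\os(\G)$ and $\beta\in\scal{\G}{R}$ from the component $x_0=0$: the inclusion $\beta R\G\subseteq L$ says that the image of the group $\beta R\G$ in $\R^d/\G$ lands in the finite set $\{x_0+\G,\ldots,x_{m-1}+\G\}$, so this image is a finite subgroup of order at most $m$; equivalently $[\beta R\G:(\G\cap\beta R\G)]\le m$ is finite, making $\G\cap\beta R\G$ a full-rank sublattice of both $\G$ and $\beta R\G$, which is exactly $R\in\os(\G)$ and $\beta\in\scal{\G}{R}$ (with $n$ now well defined and finite). For each $k$, the inclusion $\beta R(x_k+\G)\subseteq L$ then forces every one of its $n$ subcosets into a component; by the distinctness observation these land in $n$ distinct components $x_{j_1}+\G,\ldots,x_{j_n}+\G$, and $\beta R(x_k+\G)\cap(x_{j_i}+\G)\neq\varnothing$ translates via Lemma~\ref{component1-lem}(1) into $\beta R x_k - x_{j_i}\in\G+\beta R\G$, yielding the required $n$ indices.

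For the converse, I would assume $R\in\os(\G)$, $\beta\in\scal{\G}{R}$ (so $\G\cap\beta R\G$ is a full-rank sublattice and $n$ is finite) together with the counting hypothesis. Fixing $k$, the $n$ indices satisfy $\beta R x_k - x_{j_i}\in\G+\beta R\G$, so by Lemma~\ref{component1-lem} each intersection $\beta R(x_k+\G)\cap(x_{j_i}+\G)$ is nonempty and, by item~2, equals a full coset of $\G\cap\beta R\G$. Distinctness of the components $x_{j_i}+\G$ forces these to be $n$ \emph{distinct} subcosets of $\beta R(x_k+\G)$; since $\beta R(x_k+\G)$ contains exactly $n$ such subcosets, these exhaust it, whence $\beta R(x_k+\G)=\bigcup_{i=1}^{n}\bigl[\beta R(x_k+\G)\cap(x_{j_i}+\G)\bigr]\subseteq L$. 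Ranging over $k$ gives $\beta R L\subseteq L$; the index $[L:\beta R L]$ is finite (both packings having $m$ components), so $R\in\os(L)$ and $\beta\in\Scal{L}{R}$.

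The main obstacle I anticipate is the bookkeeping that makes the count \emph{exact} rather than a mere inequality: one must argue that the $n$ subcosets of $\G\cap\beta R\G$ inside a single $\beta R\G$-coset occupy precisely $n$ distinct $\G$-cosets (never collapsing and never fewer), so that ``$\beta R(x_k+\G)\subseteq L$'' is equivalent to meeting exactly $n$ components instead of at least $n$. The other delicate point is deriving $R\in\os(\G)$ solely from the finiteness of the number of components, rather than assuming commensurability at the outset; the quotient-group argument in the forward direction is the mechanism I would use to secure it.
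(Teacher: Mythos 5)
Your proposal is correct and follows essentially the same route as the paper's proof: both reduce the containment $\beta RL\subseteq L$ to the decomposition of each coset $\beta R(x_k+\G)$ into exactly $n$ cosets of $\G\cap\beta R\G$, invoke Lemma~\ref{component1-lem} to convert nonempty intersections of components into the membership condition $\beta Rx_k-x_{j_i}\in\G+\beta R\G$, and then count. The only cosmetic difference is how commensurability is extracted in the forward direction (you use the finiteness of the image of $\beta R\G$ in $\R^d/\G$, the paper uses the finite index of $\beta RL$ in $L$ together with the finiteness of $m$), but the substance is identical.
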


		\begin{proof}
			Suppose that $R\in\os(L)$ and $\beta\in\Scal{L}{R}$.
			Then $\beta RL$ is a subset of $L$ of finite index, and so each component $\beta R(x_k + \G)$ of $\beta RL$ must intersect at least one component of $L$.
			Thus, $J:=\{j \in \{0,1,\ldots,m-1\} : \beta R(x_k+\G) \cap (x_j + \G) \neq \varnothing \}$ is non-empty.
			Note that for each $j\in J$, we have $\beta Rx_k-x_j\in\G+\beta R\G$ by Lemma~\ref{component1-lem}.
			Moreover, by~\eqref{eq2}, there exists $\ell'_j \in \G$ 
			such that $\beta R(x_k+\G) \cap (x_j + \G) = \beta Rx_k + \beta R \ell'_j + (\G \cap \beta R\G)$.
			Since $\beta R(x_k + \G) \subseteq L$, we get 
			\begin{alignat*}{2}
				\beta R (x_k + \G)  &= \beta R(x_k + \G) \cap L \\
				&= \bigcup_{j \in J} [\beta R(x_k+\G) \cap (x_j + \G)] \\
				&= \beta Rx_k + \bigcup_{j \in J} [\beta R \ell'_j + (\G \cap \beta R\G)]. 
			\end{alignat*}
			First, we see that $\beta RL=\bigcup_{k=0}^{m-1}\beta R(x_k+\G)$ consists of shifted copies of $\G\cap \beta R\G$, including $\G\cap \beta R\G$ itself. 
			Since $\beta RL$ is of finite index in $L$ and $m$ is finite, it follows that $\G\cap \beta R\G$ is a sublattice of $\G$.
			Hence, $R\in\os(\G)$ and $\beta\in\scal{\G}{R}$.
			Next, we obtain from above that $\beta R\G = \bigcup_{j \in J}[\beta R \ell'_j + (\G \cap \beta R\G)]$. 
			Consequently, the right-hand side of the equality should be the decomposition of $\beta R\G$ into cosets of $\G \cap \beta R\G$ in $\beta R\G$.
			This means that $|J| = [\beta R\G : (\G \cap \beta R\G)]=n$. Setting $J=\{j_1,\ldots,j_n\}$, the conclusion now follows.
			
			Conversely, Lemma~\ref{component1-lem} implies that for each $i\in \{1,\ldots, n\}$, 
			$\beta R(x_k+\G) \cap (x_{j_i} + \G) = \beta Rx_k + \beta R\ell'_i + (\G \cap \beta R\G)$ for some $\ell'_i \in \G$. 
			Note that $\G \cap \beta R\G$ is a sublattice of $\beta R\G$ because $R\in\os(\G)$ and $\beta\in\scal{\G}{R}$.
			Furthermore, each $\beta R(x_k+\G) \cap (x_{j_i} + \G)$ is distinct and $n=[\beta R\G : (\G \cap \beta R\G)]$, which imply that
			$\{\beta R\ell'_1,\ldots,\beta R\ell'_n\}$ forms a complete set of coset representatives of $\G\cap\beta R\G$ in $\beta R\G$.
			Hence, for every $k \in \{0,1,\ldots,m-1\}$, we have $\beta R(x_k+\G)\subseteq L$ because
			\begin{alignat*}{2}
				\beta R(x_k+\G) \cap L&\supseteq\bigcup_{i=1}^{n}[\beta R(x_k+\G)\cap (x_{j_i} +\G)]\\
				&=\beta Rx_k +\bigcup_{i=1}^{n}[\beta R\ell'_i + (\G \cap \beta R\G)]\\
				&=\beta R(x_k+\G).
			\end{alignat*}					
			Now, Lemma~\ref{component1-lem} also tells us that for each $i\in \{1,\ldots, n\}$,
			$\beta R(x_k+\G) \cap (x_{j_i} + \G)=x_{j_i}+\ell_i+ (\G \cap \beta R\G)$ for some $\ell_i \in \G$. 
			Since $\G\cap\beta R\G$ is also a sublattice of $\G$, we obtain that $\beta R(x_k+\G) \cap (x_{j_i} + \G)$ is of finite index in $x_{j_i}+\G$. 
			We then conclude that $\beta RL=\bigcup_{k=0}^{m-1} \beta R(x_k + \G)$ is a subset of $L$ of finite index.
			Therefore, $R\in\os(L)$ and $\beta\in\Scal{L}{R}$.
		\end{proof}
	
		Theorem~\ref{component2} implies that $\os(L) \subseteq \os(\G)$ and $\Scal{L}{R}\subseteq\scal{\G}{R}$ for any similarity isometry $R\in\os(L)$.
		One can still somehow improve the latter inclusion: if $\beta \in \Scal{L}{R}$ then $\beta\in\frac{1}{n} \Scal{\G}{R}$, where $n = [\beta R\G : (\G \cap \beta R\G)]$. 
		Indeed, we have $n\beta R\G \subseteq \G \cap \beta R\G \subseteq \G$ which means that $n\beta \in \Scal{\G}{R}$. 
		Another result that follows from Theorem~\ref{component2} is that $n$ can be at most the number $m$ of components of $L$.
		
		Let $R\in\os(L)$ and $\beta\in\Scal{L}{R}$. 
		Consider the case when a component of $\beta RL$ intersects more than one component of~$L$. 
		Suppose $k,j_1, j_2 \in \{0,1,\ldots,m-1\}$ such that $j_1\neq j_2$, and $\beta R(x_k+\G) \cap (x_{j_i} + \G)\neq\varnothing$ for $i\in\{1,2\}$. 
		By Lemma~\ref{component1-lem}, we have
		$\beta R x_k - x_{j_i} \in \G + \beta R\G$ for $i \in \{1,2\}$,
		from which we obtain $x_{j_2} - x_{j_1} \in \G + \beta R\G$. 
		Recall from the previous paragraph that if $n = [\beta R\G : (\G \cap \beta R\G)]$ then $n\beta R\G \subseteq \G$. 
		Thus, $\G + \beta R\G \subseteq \G + \frac{1}{n}\G = \frac{1}{n}\G$, and so $x_{j_2} - x_{j_1} \in \frac{1}{n}\G$, where $2 \leq n \leq m$. 
		This yields the following result.  
		
		\begin{cor} \label{component2c}
			Let $L = \bigcup_{k=0}^{m-1} (x_k + \G)$ be a point packing generated by a lattice $\G \subseteq \R^d$, $R \in \os(L)$, $\beta \in \Scal{L}{R}$, 
			and $n = [\beta R\G : (\G \cap \beta R\G)]$.  
			If $2 \leq n \leq m$, then there exist distinct $j_1, j_2 \in \{0,1,\ldots,m-1\}$ such that $x_{j_2} - x_{j_1} \in \frac{1}{n}\G$. 
		\end{cor}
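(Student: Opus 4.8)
The plan is to combine the characterization in Theorem~\ref{component2} with the index estimate recorded in the paragraph immediately preceding the corollary. First, since $R\in\os(L)$ and $\beta\in\Scal{L}{R}$, Theorem~\ref{component2} guarantees that for any fixed index $k$ (say $k=0$) there exist $n$ distinct indices $j_1,\ldots,j_n\in\{0,1,\ldots,m-1\}$ with $\beta Rx_k-x_{j_i}\in\G+\beta R\G$ for each $i$. The hypothesis $2\le n$ then permits me to single out two of these, $j_1\ne j_2$, so that both $\beta Rx_k-x_{j_1}$ and $\beta Rx_k-x_{j_2}$ lie in $\G+\beta R\G$.

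Next I would subtract the two membership relations to cancel the common term $\beta Rx_k$. Because $\G+\beta R\G$ is a subgroup of $\R^d$ under addition, this gives
\[
x_{j_2}-x_{j_1}=(\beta Rx_k-x_{j_1})-(\beta Rx_k-x_{j_2})\in\G+\beta R\G.
\]
At this stage the desired index $k$ has been eliminated, and what remains is to sharpen the containment $x_{j_2}-x_{j_1}\in\G+\beta R\G$ into $x_{j_2}-x_{j_1}\in\tfrac{1}{n}\G$.

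For this last step I would invoke the inclusion $n\beta R\G\subseteq\G$ established just above the corollary, which follows from $n\beta R\G\subseteq\G\cap\beta R\G\subseteq\G$. Dividing by $n$ yields $\beta R\G\subseteq\tfrac{1}{n}\G$, and since $\G\subseteq\tfrac{1}{n}\G$ trivially, I obtain $\G+\beta R\G\subseteq\tfrac{1}{n}\G$. Combining this with the previous display produces $x_{j_2}-x_{j_1}\in\tfrac{1}{n}\G$, which is exactly the assertion.

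I do not expect a genuine obstacle here, as the argument is essentially bookkeeping once Theorem~\ref{component2} is in hand. The only point requiring care is ensuring that the two chosen indices are distinct, which is precisely what the hypothesis $2\le n$ supplies; the upper bound $n\le m$ is automatic, since the $j_i$ are $n$ distinct elements of the $m$-element set $\{0,1,\ldots,m-1\}$. The single conceptual input is the estimate $n\beta R\G\subseteq\G$, but that has already been derived, so the corollary follows almost immediately.
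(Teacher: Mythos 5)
Your argument is correct and is essentially the paper's own proof: both obtain two distinct indices $j_1\neq j_2$ with $\beta Rx_k-x_{j_i}\in\G+\beta R\G$ (the paper via Lemma~\ref{component1-lem}, you via Theorem~\ref{component2}, which amounts to the same thing), subtract to get $x_{j_2}-x_{j_1}\in\G+\beta R\G$, and then use $n\beta R\G\subseteq\G$ to conclude $\G+\beta R\G\subseteq\tfrac{1}{n}\G$. No gaps.
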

		
		We illustrate the previous results in the following example.  
		
		\begin{ex} \label{ex-component}
			Let $L = \G \cup (1+\G) \cup (2+\G)$ be the square lattice $\Z[i]$ viewed as a point packing that is generated by the $3 \times 1$ rectangular lattice 
			$\G=\{ 3a+bi : a,b \in \Z \}$. 
			Here, $\V_L = \{ 0,1,2 \}$.
			Take $\beta = 1$ and $R$ to be the rotation about the origin by $\frac{\pi}{2}$ in the counterclockwise direction.
			Then $R$ is a similarity isometry of $L$ with $\beta \in \Scal{L}{R}$, and $\beta RL=L$ is a similar subpacking of $L$.
			By inspection, one obtains that $n = [\beta R\G : (\G \cap \beta R\G)] = 3$. 
			Hence, by Theorem~\ref{component2}, each component of $\beta RL$ intersects all three components of $L$, as can be seen in Figure~\ref{figure0}.
			One can also verify that $\beta R x_k - x_j \in \G + \beta R\G = \Z[i]$ for all $x_k, x_j \in \V_L$. 
			Finally, observe that $x_k - x_j \in \{\pm 1, \pm 2\} \subseteq \frac{1}{3}\G$ for all $x_k, x_j \in \V_L$, as expected from Corollary~\ref{component2c}. 
			
			\begin{figure}[!ht]
				\centering
				\includegraphics[width=4 cm]{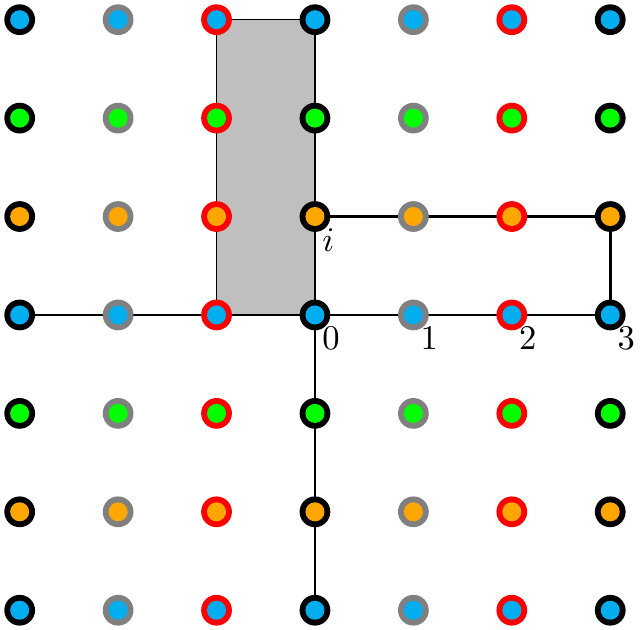}
				\caption{Let $L$ be the square lattice $\Z[i]$ written as the union of the $3 \times 1$ rectangular lattice $\G=\{ 3a+bi : a,b \in \Z \}$ (black outline) 
					with its shifted copies $1+\G$ (gray outline) and $2+\G$ (red outline).
					If $\beta=1$ and $R$ is the rotation about the origin by $\frac{\pi}{2}$, then $R\in\os(L)$ and $\beta\in\Scal{L}{R}$.
					Here, $\beta RL$ consists of the union of $\beta R\G$ (blue dots), $\beta R(1+\G)$ (yellow dots), and $\beta R(2+\G)$ (green dots), 
					and each component of $\beta RL$ intersects $[\beta R\G : (\G \cap \beta R\G)] = 3$ components of $L$.}
				\label{figure0}		
			\end{figure}
		\end{ex}
		
		Given $R\in\os(L)$ and $\beta\in\Scal{L}{R}$, the components of the point packing $L$ need not contain the same number of components of $\beta RL$.
		We see this in the next example. 
		
		\begin{ex} \label{not-bijection1}
			Let $L = \G \cup (\frac{1}{2}+\G)$ be the $1 \times 2$ rectangular lattice viewed as a point packing that is generated by the square lattice $\G=\Z[i]$. 
			Take $\beta = 2\sqrt{2}$ and $R$ to be the rotation about the origin by $\frac{\pi}{4}$ in the counterclockwise direction.
			Then $R$ is a similarity isometry of $L$ with $\beta \in \Scal{L}{R}$, and $\beta RL$ is a similar subpacking of $L$.
			Figure~\ref{not-bijection-square} shows that $\beta R\G\subseteq\G$, and so $n=[\beta R\G : (\G \cap \beta R\G)]=1$.
			It follows from Theorem~\ref{component2} that each component of $\beta RL$ intersects exactly one component of $L$.
			However, both components of $\beta RL$, namely $\beta R\G$ and $\beta R(\frac{1}{2}+\G)$, are contained in $\G$,
			while $\frac{1}{2}+\G$ does not contain any component of $\beta RL$.	
			
			\begin{figure}[!ht]
				\centering
				\includegraphics[width=5.5 cm]{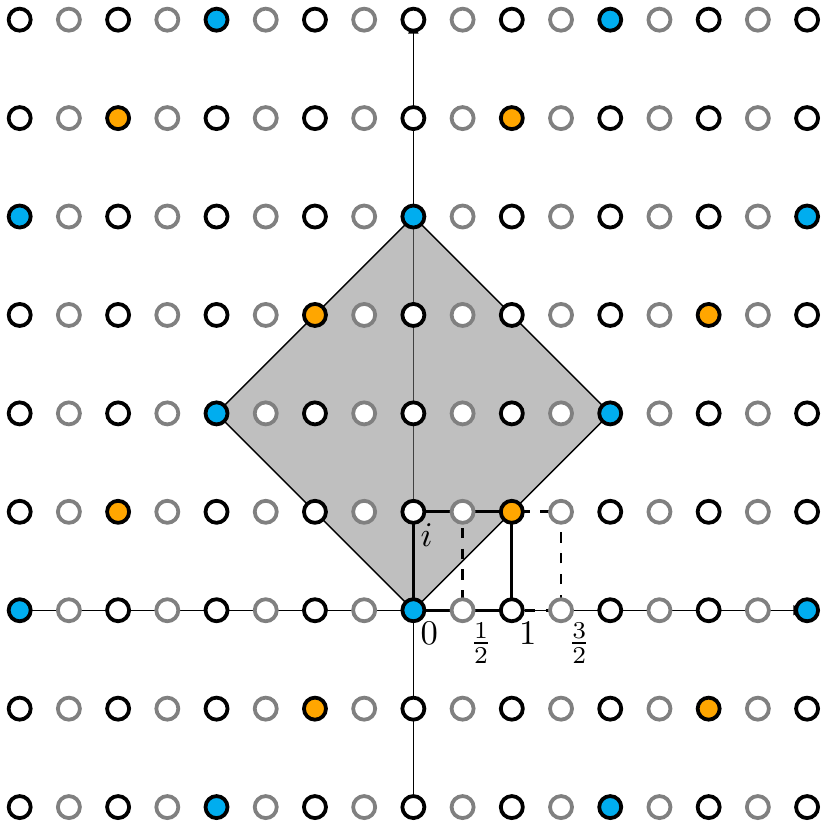}
				\caption{Let $L$ be the $1 \times 2$ rectangular lattice viewed as the point packing given by the union of the square lattice $\G = \Z[i]$ (black dots) 
					and its shifted copy $\frac{1}{2}+\G$ (gray dots).
					If $\beta = 2\sqrt{2}$ and $R$ is the rotation about the origin by $\frac{\pi}{4}$, 
					then the set $\beta RL$, consisting of the union of $\beta R\G$ (blue dots) and $\beta R(\frac{1}{2}+\G)$ (yellow dots), is a similar subpacking of $L$.} 
				\label{not-bijection-square}
			\end{figure}		
		\end{ex} 
		
		Let $R\in\os(L)$ and $\beta\in\Scal{L}{R}$. 
		Similar to the situation in Example~\ref{not-bijection1}, we look at what happens when $\beta\in\Scal{\G}{R}$, that is, when $\beta R\G\subseteq \G$.
		Here, we have $n=[\beta R\G : (\G \cap \beta R\G)]=1$, and so every component of $\beta RL$ is a subset of exactly one component of $L$.
		Theorem 3.2 now reduces to the following corollary.
		
		\begin{cor} \label{component2b-cor}
			Let $L = \bigcup_{k=0}^{m-1} (x_k + \G)$ be a point packing generated by a lattice $\G \subseteq \R^d$, $R \in \os(\G)$, and $\beta \in \Scal{\G}{R}$.
			Then $R\in\os(L)$ and $\beta\in\Scal{L}{R}$ if and only if 
			for every $k \in \{0,1,\ldots,m-1\}$, there exists a $j \in \{0,1,\ldots,m-1\}$ such that 
			$\beta R x_k - x_j \in \G$. 
		\end{cor}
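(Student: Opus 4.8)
The plan is to derive this corollary directly from Theorem~\ref{component2} by specializing to the regime $\beta R\G \subseteq \G$. Since the hypotheses here are $R \in \os(\G)$ and $\beta \in \Scal{\G}{R}$, the central observation is that $\beta R\G \subseteq \G$ forces the index $n = [\beta R\G : (\G \cap \beta R\G)]$ appearing in Theorem~\ref{component2} to collapse to $1$, after which the general condition on the shift vectors simplifies to the stated matching that assigns to each $k$ a single $j$.

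First I would verify that the hypotheses of Theorem~\ref{component2} are met. Because $\beta \in \R^+$ and $\beta \in \Scal{\G}{R}$, we have $\beta \neq 0$, and the inclusion $\Scal{\G}{R} \subseteq \scal{\G}{R} \cup \{0\}$ recorded in the Preliminaries yields $\beta \in \scal{\G}{R}$; thus $R \in \os(\G)$ with $\beta \in \scal{\G}{R}$, and Theorem~\ref{component2} applies. Next I would compute $n$: since $\beta R\G \subseteq \G$, the intersection satisfies $\G \cap \beta R\G = \beta R\G$, whence $n = [\beta R\G : \beta R\G] = 1$. With $n = 1$, the conclusion of Theorem~\ref{component2} reads: for every $k$ there is a single $j$ with $\beta R x_k - x_j \in \G + \beta R\G$. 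Finally, the same inclusion $\beta R\G \subseteq \G$ gives $\G + \beta R\G = \G$, so the membership condition becomes $\beta R x_k - x_j \in \G$, which is exactly the asserted criterion. The two directions of the biconditional then transfer from the corresponding implications of Theorem~\ref{component2}.

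Since every step is a direct specialization, there is no substantive obstacle; the only point requiring a moment's care is recognizing that both simplifications---the collapse $n = 1$ and the identity $\G + \beta R\G = \G$---hinge on the single stronger hypothesis $\beta R\G \subseteq \G$, which is precisely what $\beta \in \Scal{\G}{R}$ encodes. This is the feature that distinguishes the present setting from the general theorem, where only $\beta \in \scal{\G}{R}$ is assumed and $\beta R\G$ need merely be commensurate with $\G$ rather than contained in it.
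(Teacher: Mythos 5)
Your proposal is correct and matches the paper's own route exactly: the paper likewise obtains this corollary as the specialization of Theorem~\ref{component2} to the case $\beta R\G \subseteq \G$, where $\G \cap \beta R\G = \beta R\G$ forces $n=1$ and $\G + \beta R\G = \G$ simplifies the membership condition. Your added care in checking that $\beta \in \Scal{\G}{R}\setminus\{0\}$ implies $\beta \in \scal{\G}{R}$ is a welcome explicit verification of the theorem's hypotheses.
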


		In the next section, we discuss planar examples that fall under the case above.
		
		Let us now introduce the following notation to help us identify the components of the similar subpacking $\beta RL$ of the 
		point packing $L=\bigcup_{k=0}^{m-1} (x_k + \G)$.
		The components of $\beta RL$ are determined by 
		identifying the pairs $(x_k, x_j)$ of shift vectors of $L$ satisfying the conditions in Theorem~\ref{component2}.
		The set of all such pairs $(x_k, x_j)$ will be denoted by $\tau(\beta RL)$, and is given by 
		\[\tau(\beta RL)=\{ (x_k, x_j) \in V_L \times V_L : \beta Rx_k - x_j \in \G + \beta R\G \}.\]
		Note that $|\tau(\beta RL)| = mn$ where $n = [\beta R\G : (\G \cap \beta R\G)] \leq m$. 
		In particular, if $\beta \in \Scal{\G}{R}$, then we see from Corollary~\ref{component2b-cor} that
		\[\tau(\beta RL) = \{ (x_k, x_j) \in V_L \times V_L : \beta Rx_k-x_j \in \G \}\]
		where $|\tau(\beta RL)| = m$. 
		
		Finally, let us examine the set of scaling factors $\Scal{L}{R}$ for a similarity isometry $R$ of $L$.
		Note that in general, negative scaling factors may exist, that is, it is possible that $\beta RL$ is a similar subpacking of $L$ for some $\beta <0$.
		In fact, for a lattice $\G$, if $\beta \in \R^+$ such that $\beta R\G \subseteq \G$, then $-\beta R\G \subseteq \G$.
		If a point packing $L$ is invariant under inversion, that is, $-L=L$, then for all $R \in \os(L)$ such that $0 < \beta \in \Scal{L}{R}$, we have $-\beta RL \subseteq L$. 
		This means that we can consider $-\beta < 0$ as a scaling factor of $L$. 
		
		Recall that for any similarity isometry $R$ of a point packing $L$ generated by a lattice~$\G$, 
		if $\beta\in\Scal{L}{R}$ and $n=[\beta R\G : (\G \cap \beta R\G)] \leq m$, then $\beta\in\frac{1}{n}\Scal{\G}{R}=\tfrac{1}{n}\den{\G}{R}\, \Z$.
		In words, any element of $\Scal{L}{R}$ may be written as an integral multiple of $\frac{1}{n}\den{\G}{R}$, for some $n\leq m$.
		This fact, together with Theorem~\ref{component2}, allows us to compute $\Scal{L}{R}$ for a given similarity isometry $R \in \os(L)$. 
		
		A discussion on the algebraic structure of the set $\os(L)$ can be found in Appendix~\ref{app}, where we show that $\os(L)$ is a monoid under certain assumptions. 
			
	\section{Planar Examples}
		We now apply the results in the previous section to planar point packings.
		In particular, we compute $\sos(L)$ and $\os(L)$ for certain point packings $L$ generated by the square lattice $\Z[i]$ or hexagonal lattice $\Z[\w]$ with $\w = e^{2\pi i/3}$, 
		and show that they are groups.
		Afterwards, we investigate two concrete examples: the $1 \times 2$ rectangular lattice viewed as a point packing generated by the square lattice, 
		and the hexagonal packing viewed as a point packing generated by the hexagonal lattice.	
		For both examples, we will show that each component of the similar subpacking is contained in a unique component of the point packing. 
		
		It is known that any similarity rotation $R$ of $\Z[i]$ (or $\Z[\w]$) corresponds to multiplication by a non-zero element of the form $z/|z|$, 
		where $z \in \Z[i]$ (or $\Z[\w]$)~\cite{BM98,G11,GB08,BSZ11}. 
		In addition, $\den{\Z[i]}{R} = \den{\Z[\w]}{R} = |z|$.
		Meanwhile, any reflection $T$ can be expressed as a product $T = R T_r$, where $R$ is a rotation, 
		and $T_r$ denotes the reflection about the real axis and corresponds to complex conjugation.
		Because of this, similarity rotations will be computed first in each example, followed by similarity reflections. 
		Moreover, a reflection $T$ will be represented in Tables~\ref{table1},~\ref{table3}, and~\ref{table5} by its rotational part $R$ in the representation $T = RT_r$. 

		Let us now proceed to compute for $\sos(L)$ and $\os(L)$ of point packings $L$ that are generated by a square or a hexagonal lattice, 
		all of whose shift vectors have rational components.
	
		\begin{prop} \label{os-planar-rational}
			Let $L = \bigcup_{k=0}^{m-1} (x_k + \G)$ be a point packing generated by the lattice $\G = \Z[i]$ (or $\Z[\w]$ with $\w = e^{2\pi i/3}$).
			If $x_k \in \Q(i)$ (or $\Q(\w)$) for all $k\in\{0,1,\ldots,m-1\}$, then $\sos(L) = \sos(\G)$ and $\os(L) = \os(\G)$.
		\end{prop}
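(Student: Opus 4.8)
The inclusions $\os(L)\subseteq\os(\G)$ and $\sos(L)\subseteq\sos(\G)$ are immediate: the first is exactly what Theorem~\ref{component2} yields, and intersecting with $\so(2)$ gives the second since $\sos(\bullet)=\os(\bullet)\cap\so(2)$. So the whole content of the proposition is the reverse inclusion $\os(\G)\subseteq\os(L)$; once that is in hand, $\os(L)=\os(\G)$ follows, and intersecting again with $\so(2)$ recovers $\sos(L)=\sos(\G)$. My plan is to produce, for each $R\in\os(\G)$, an explicit scaling factor $\beta$ witnessing $R\in\os(L)$ by appealing to Corollary~\ref{component2b-cor}.

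I would treat rotations first. Let $R\in\sos(\G)$; by the cited structure of similarity rotations of $\Z[i]$ (resp.\ $\Z[\w]$), $R$ is multiplication by $z/|z|$ for some nonzero $z\in\G$, with $\den{\G}{R}=|z|$. Since $V_L\subseteq\Q(i)$ (resp.\ $\Q(\w)$) is finite, there is a common denominator $q\in\N$ with $qx_k\in\G$ for every $k$. The idea is to choose $\beta=q\,|z|$, so that $\beta R$ is simply multiplication by $qz\in\G$. Then $\beta R\G=qz\,\G\subseteq\G$ gives $\beta\in\Scal{\G}{R}$, and, more to the point, $\beta Rx_k=qz\,x_k=z\,(qx_k)\in\G$ for every $k$. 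Taking $x_0=0$, the congruence $\beta Rx_k-x_0\in\G$ holds for all $k$, so Corollary~\ref{component2b-cor} delivers $R\in\os(L)$ with $\beta\in\Scal{L}{R}$. Hence $\sos(\G)\subseteq\sos(L)$; geometrically, $\beta$ is chosen large enough to push every rational shift vector into the generating lattice, so each component of $\beta RL$ collapses into the single component $\G=x_0+\G$ of $L$.

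For reflections I would use the decomposition $T=RT_r$ recorded before the proposition, where $T_r$ is complex conjugation and $R$ a similarity rotation (with $R=TT_r\in\os(\G)$ since $T_r\in\os(\G)$). The only extra fact needed is that conjugation preserves both $\G$ and $\Q(i)$ (resp.\ $\Q(\w)$): for $\Z[\w]$ this is because $\bar\w=\w^2=-1-\w\in\Z[\w]$. Writing $R$ as multiplication by $z/|z|$ and keeping $\beta=q\,|z|$ as above, one computes $\beta Tx_k=qz\,\overline{x_k}=z\,\overline{(qx_k)}\in\G$, since $qx_k\in\G$ is fixed setwise by conjugation; likewise $\beta T\G=qz\,\overline{\G}=qz\,\G\subseteq\G$. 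Corollary~\ref{component2b-cor} again applies with $j=0$, giving $T\in\os(L)$. Combining the two cases yields $\os(\G)\subseteq\os(L)$, completing the argument.

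The step I expect to be the crux is the choice $\beta=q\,\den{\G}{R}$: it is precisely the rationality hypothesis on the shift vectors that guarantees a finite common denominator $q$, and hence that a single uniform scaling simultaneously absorbs all shifts into $\G$. Without rationality one could not force $\beta Rx_k\in\G$, and the clean conclusion $\os(L)=\os(\G)$ would generally fail; everything else is bookkeeping through Corollary~\ref{component2b-cor} and the known description of $\os(\G)$.
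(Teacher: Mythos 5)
Your proposal is correct and follows essentially the same route as the paper: both pick a common integer denominator $q$ for the rational shift vectors (the paper uses $\lcm(r_1,\ldots,r_{m-1})$), set $\beta = q\,\den{\G}{R}$ so that $\beta R$ is multiplication by $qz\in\G$, verify $\beta Rx_k - x_0 \in \G$ for all $k$, and invoke Corollary~\ref{component2b-cor}, handling reflections via $T=RT_r$ and the fact that conjugation preserves $\G$. The only cosmetic difference is that you spell out the easy inclusion $\os(L)\subseteq\os(\G)$ and the closure of $\Z[\w]$ under conjugation, which the paper leaves implicit.
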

		
		\begin{proof}
			Let $\G = \Z[i]$ and $R\in\sos(\G)$ corresponding to multiplication by $z/|z|$, where $0 \neq z \in \Z[i]$.
			For each $x_k \in V_L$, write $x_k = \frac{y_k}{r_k}$ for some $y_k \in \Z[i]$ and $0 \neq r_k \in \Z$.
			Take $\beta = \lcm(r_1,\ldots,r_{m-1}) \den{\G}{R} \in \Scal{\G}{R}$.
			It follows that for all $x_k \in V_L$, 
			we have \[\beta R x_k - x_0 = \lcm(r_1,\ldots,r_{m-1}) |z| \cdot \frac{z}{|z|} \cdot \frac{y_k}{r_k} - 0 = q_k z y_k \in \G,\] for some $q_k \in \Z$.
			Hence, $R \in \sos(L)$ by Corollary~\ref{component2b-cor}. Therefore, $\sos(L) = \sos(\G)$.
			
			For reflections $T = RT_r \in \os(L) \setminus \sos(L)$, note that $\overline{x_k} = \frac{\overline{y_k}}{r_k}$.
			Using the same scaling factor $\beta$, we obtain $\beta T x_k - x_0 = \beta R \overline{x_k} - x_0 = q_k z \overline{y_k} \in \G$, for some $q_k\in\Z$.
			This shows that $T \in \os(L)$ and therefore, $\os(L) = \os(\G)$.
			The proof for $\G = \Z[\w]$ is similar.
		\end{proof} 
		
		Hence, we obtain that $\os(L)$ and $\sos(L)$ are groups for a point packing $L$ generated by a square or a hexagonal lattice, all of whose shift vectors have rational components.
	
		We now look at two specific planar examples: the $1 \times 2$ rectangular lattice and the hexagonal packing.
	
		\subsection{The $\mathbf{1 \times 2}$ rectangular lattice.}
			Let $L = \G \cup (\frac{1}{2}+\G)$  be the $1 \times 2$ rectangular lattice viewed as a point packing that is generated by the square lattice $\G=\Z[i]$. 
			Suppose $R \in \os(L)$ and $\beta \in \Scal{L}{R}$.
			If $R$ is a similarity rotation of $L$, then by Theorem~\ref{component2}, $R$ corresponds to multiplication by $z/|z|$ for some $0 \neq z \in \Z[i]$ since $R\in \sos(\G)$, and 
			$\beta = (p/q)|z|$ for some relatively prime $p,q \in \Z \setminus \{0\}$ since $\beta \in \scal{\G}{R} = \den{\G}{R}\, \Q^{\ast}$.
			In addition, if $n = [\beta R\G : (\G \cap \beta R\G)]$, then $n \leq 2$ because $L$ has only two components.
			We have $\beta R\G = (pz/q)\G$ and 
			\[ \G \cap \beta R\G = \tfrac{1}{q} (q\G \cap pz\G) = \tfrac{1}{q} \lcm(pz,q) \G = \tfrac{pz}{\gcd(z,q)} \G. \]
			Here, we take the $\gcd$ and $\lcm$ of elements of the Euclidean ring $\Z[i]$. 
			Interpreting $n$ as the quotient of the volume of the fundamental domain of $\G \cap \beta R\G$ by the volume of the fundamental domain of $\beta R\G$, we obtain
			\[ n = \frac{p^2{|z|}^2}{{|\gcd(z,q)|}^2} \div \frac{p^2{|z|}^2}{q^2} = \frac{q^2}{{|\gcd(z,q)|}^2}. \]
			Suppose $n=2$.
			Then $q^2 = 2{|\gcd(z,q)|}^2$. 
			In addition, it follows from Theorem~\ref{component2} that 
			\[-\tfrac{1}{2}=\beta R(0) - \tfrac{1}{2} \in \G + \beta R\G= \tfrac{1}{q} (q\G + pz\G) = \tfrac{1}{q} \gcd(z,q) \G.\] 
			Thus, $q/[2 \gcd(z,q)] \in \G$.
			However, its number-theoretic norm is given by 
			\[\tfrac{q^2}{4{|\gcd(z,q)|}^2} = \tfrac{1}{2},\]
			and we reach a contradiction.
			Hence, $n=1$, that is, $\beta\in\Scal{\G}{R}$ or that each component of the similar subpacking $\beta RL$ is contained in a unique component of $L$.
			If $T=RT_r$ is a similarity reflection of $L$ where $R\in\sos(L)$, then similar arguments still yield that $n=1$. 
			
			Table~\ref{table1} summarizes the results when Corollary~\ref{component2b-cor} is applied.
			In Table~\ref{table1}, we list $\Scal{L}{R}$ and $\tau(\beta RL)$ for each $R \in \os(\G)$ corresponding to multiplication by \linebreak${(a+bi)/|a+bi|}$ 
			and depending on the parity of $a$ and $b$.
			Since $T_r L = L$, it follows that for any similarity reflection $T \in \os(L) \setminus \sos(L)$ and $\beta \in \Scal{L}{T}$, $\beta TL = \beta RT_r L = \beta RL$.
			Hence, the results in Table~\ref{table1} hold for any similarity isometry in $\os(L)$,
			where a reflection $T$ is represented in the table by its rotational part $R$ in the representation $T = RT_r$. 
			Note that when $R$ corresponds to multiplication by $a+bi \in \Z[i]$ where $a$ is odd and $b$ is even,
			then \[\Scal{L}{R}=\den{\G}{R}\,\Z=[\den{\G}{R}\, 2\Z]\cup[\den{\G}{R}(1+2\Z)],\]
			where $\beta R(\frac{1}{2}+\G) \subseteq \G$ if $\beta$ is an even multiple of $\den{\G}{R}$, 
			and $\beta R(\frac{1}{2}+\G) \subseteq \frac{1}{2}+\G$ if $\beta$ is an odd multiple of $\den{\G}{R}$.  
			
			Table~\ref{table1} implies that $\os(L) = \os(\G)$.
			This result also follows from Proposition~\ref{os-planar-rational} since the shift vectors $0$ and $\frac{1}{2}$ of $L$ are rational.
			The group $\os(L)$ may also be obtained by applying Corollary 3.7 of~\cite{G11} to $L$, now viewed as an ordinary lattice. 
			Nevertheless, viewing $L$ here as a point packing gives us a more concrete approach that is easy to describe geometrically. 
			It also allows us to extract exact information on the components of the similar subpackings $\beta RL$, 
			or in this case, the similar sublattices of the $1 \times 2$ rectangular lattice $L$.
			
			\begin{table}[!ht]
				\centering			
				\renewcommand{\arraystretch}{1.2}		
				\begin{tabular}{|c|c|c|}
					\hline
					$R: (a+bi)/|a+bi|$ & $\Scal{L}{R}$ & $\tau (\beta RL)$ \\
					\hline
					\multirow{2}{*}{$(a,b) \equiv (1,0) \!\! \pmod 2$} & $\den{\G}{R}\, 2\Z$ & $\{ (0,0), (\frac{1}{2},0) \}$ \\
					& $\den{\G}{R} (1+2\Z)$ & $\{ (0,0), (\frac{1}{2},\frac{1}{2}) \}$ \\
					\hline 
					$(a,b) \equiv (0,1), (1,1) \!\! \pmod 2$ & $\den{\G}{R}\, 2\Z$ & $\{ (0,0), (\frac{1}{2},0) \}$ \\
					\hline
				\end{tabular}
				\caption{Let $L$ be the $1 \times 2$ rectangular lattice viewed as a point packing generated by the square lattice $\G = \Z[i]$. 
					The sets $\Scal{L}{R}$ and $\tau (\beta RL)$ are given for all similarity isometries $R$ in $\os(L)$.
					Here, reflections $T = RT_r$ are represented by their rotational part $R$.}
				\label{table1}	
			\end{table}				
	
			\begin{ex}\label{1by2rect}
				Take $\beta = \sqrt{5}$ and $R$ to be the rotation about the origin by $\tan^{-1}(2)$ which corresponds to multiplication by $(1+2i)/\sqrt{5}$.
				Note that $\den{\G}{R} = \sqrt{5}$, and thus according to Table~\ref{table1}, 
				$R$ is a similarity isometry of $L$ with $\Scal{L}{R} = \sqrt{5} (2\Z) \ \cup \ \sqrt{5} (1+2\Z) = \sqrt{5}\, \Z$ and $\den{L}{R} = \sqrt{5}$. 
				Since $\beta = \sqrt{5}$, the second row of Table~\ref{table1} tells us that $\beta R\G \subseteq \G$ and $\beta R(\frac{1}{2}+\G) \subseteq \frac{1}{2}+\G$, 
				as can be verified in Figure~\ref{figure2}.
				
				\begin{figure}[!ht]
					\centering
					\includegraphics[width=5 cm]{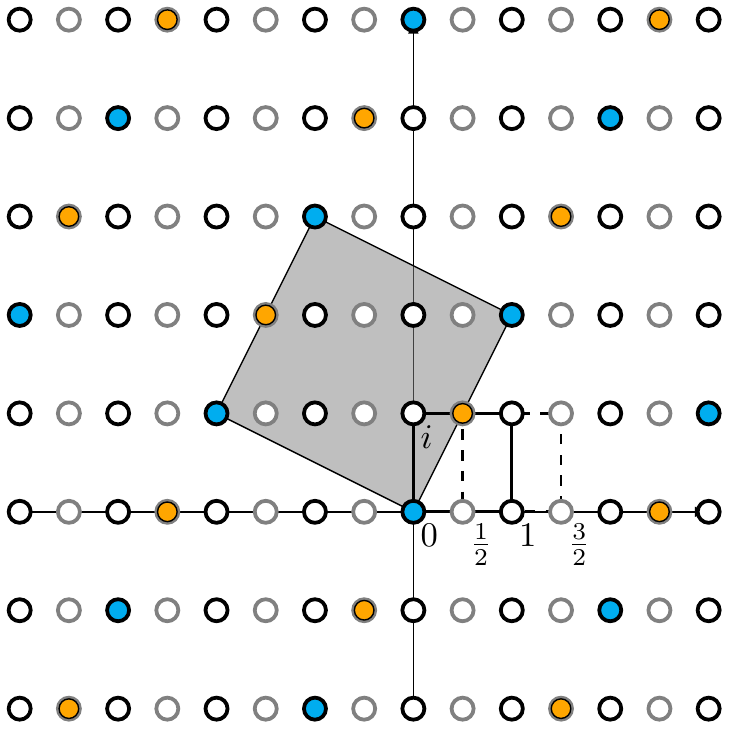}
					\caption{Let $L$ be the $1 \times 2$ rectangular lattice viewed as the union of the square lattice $\G = \Z[i]$ (black dots) and its shifted copy $\frac{1}{2}+\G$ (gray dots). 
						If $\beta = \sqrt{5}$, and $R$ corresponds to multiplication by $\frac{1+2i}{\sqrt{5}}$ (rotation about the origin by $\tan^{-1}(2)$),
						then the set $\beta RL$, consisting of the union of $\beta R\G$ (blue dots) and $\beta R(\frac{1}{2}+\G)$ (yellow dots), is a similar subpacking of $L$.}
					\label{figure2}
				\end{figure}
			\end{ex}
	
		\subsection{The hexagonal packing.}
			Let $L = \G \cup (\frac{2+\w}{3}+\G)$ be the hexagonal packing viewed as a point packing generated by the hexagonal lattice $\G=\Z[\w]$ with $\w=e^{2\pi i/3}$. 
			Note that $L$ is not a lattice.
			If $R \in \os(L)$ and $\beta \in \Scal{L}{R}$, then by Theorem~\ref{component2}, 
			$R\in\os(\G)$ and $\beta\in\scal{\G}{R}$. 
			Moreover, if $n = [\beta R\G : (\G \cap \beta R\G)]$, then $n \leq 2$ because $L$ has only two components.
			Since $\frac{2+\w}{3} - 0 = \frac{1}{3} (2+\w) \notin \frac{1}{2} \Z[\w]$, we obtain that $n\neq 2$ from Corollary~\ref{component2c}.
			Thus $n=1$, that is, $\beta\in\Scal{\G}{R}$ or that every component of the similar subpacking $\beta RL$ is contained in exactly one component of $L$. 
			
			Tables~\ref{table2} and \ref{table3} summarize the similarity isometries of $L$ obtained by applying Corollary~\ref{component2b-cor}.
			The results in Table~\ref{table2} hold for any similarity rotation $R \in \sos(L)$, 
			while those in Table~\ref{table3} hold for any similarity reflection $T \in \os(L) \setminus \sos(L)$. 
			Similar to Table~\ref{table1}, when the entry for $\Scal{L}{R}$ (or $\Scal{L}{T}$) consists of two lines, 
			it means that $\Scal{L}{R}$ (or $\Scal{L}{T}$) is the union of the sets in both lines. 
			The two tables together imply that $\os(L) = \os(\G)$, and thus, $\os(L)$ is a group. 
			This result is expected from Proposition~\ref{os-planar-rational}, since the shift vectors $0$ and $\frac{2+\w}{3}$ of $L$ are both in $\Q(\w)$.
			
			\begin{table}[!ht]
				\centering
				\renewcommand{\arraystretch}{1.2}
				\begin{tabular}{|c|c|c|}
					\hline
					$R: (a+b\w)/|a+b\w|$ & $\Scal{L}{R}$ & $\tau (\beta RL)$ \\
					\hline
					\multirow{2}{*}{$a+b \equiv 1 \!\! \pmod 3$} & $\den{\G}{R}\, 3\Z$ & $\{ (0,0), (\frac{2+\w}{3},0) \}$ \\
					& $\den{\G}{R} (1+3\Z)$ & $\{ (0,0), (\frac{2+\w}{3},\frac{2+\w}{3}) \}$ \\
					\hline 
					\multirow{2}{*}{$a+b \equiv 2 \!\! \pmod 3$} & $\den{\G}{R}\, 3\Z$ & $\{ (0,0), (\frac{2+\w}{3},0) \}$ \\
					& $\den{\G}{R} (2+3\Z)$ & $\{ (0,0), (\frac{2+\w}{3},\frac{2+\w}{3}) \}$ \\
					\hline 
					$a+b \equiv 0 \!\! \pmod 3$ & $\den{\G}{R}\, \Z$ & $\{ (0,0), (\frac{2+\w}{3},0) \}$ \\
					\hline
				\end{tabular}
				\caption{Let $L$ be the hexagonal packing viewed as a point packing generated by the hexagonal lattice $\G = \Z[\w]$, where $\w=e^{2\pi i/3}$. 
					The sets $\Scal{L}{R}$ and $\tau (\beta RL)$ are given for all similarity rotations $R$ in $\sos(L)$.}
				\label{table2}
			\end{table}
			
			\begin{table}[!ht]
				\centering
				\renewcommand{\arraystretch}{1.2}
				\begin{tabular}{|c|c|c|}
					\hline
					$T = R T_r \text{ where } R : (a+b\w)/|a+b\w|$ & $\Scal{L}{T}$ & $\tau (\beta TL)$ \\
					\hline
					\multirow{2}{*}{$a+b \equiv 1 \!\! \pmod 3$} & $\den{\G}{R}\, 3\Z$ & $\{ (0,0), (\frac{2+\w}{3},0) \}$ \\
					& $\den{\G}{R} (2+3\Z)$ & $\{ (0,0), (\frac{2+\w}{3},\frac{2+\w}{3}) \}$ \\
					\hline 
					\multirow{2}{*}{$a+b \equiv 2 \!\! \pmod 3$} & $\den{\G}{R}\, 3\Z$ & $\{ (0,0), (\frac{2+\w}{3},0) \}$ \\
					& $\den{\G}{R} (1+3\Z)$ & $\{ (0,0), (\frac{2+\w}{3},\frac{2+\w}{3}) \}$ \\
					\hline 
					$a+b \equiv 0 \!\! \pmod 3$ & $\den{\G}{R}\, \Z$ & $\{ (0,0), (\frac{2+\w}{3},0) \}$ \\
					\hline
				\end{tabular}
				\caption{Let $L$ be the hexagonal packing viewed as a point packing generated by the hexagonal lattice $\G = \Z[\w]$, where $\w=e^{2\pi i/3}$. 
					The sets $\Scal{L}{T}$ and $\tau (\beta TL)$ are given for all similarity reflections $T$ in $\os(L) \setminus \sos(L)$.  }
				\label{table3}
			\end{table}
		
			\begin{ex}\label{ex3}
				Take $\beta = 2$ and $R$ to be the rotation about the origin by $\frac{\pi}{3}$ which corresponds to multiplication by $1+\w$.
				Here, $\den{\G}{R} = 1$, and thus according to Table~\ref{table2}, $R$ is a similarity isometry of $L$ with $\Scal{L}{R} = 3\Z \ \cup \ (2+3\Z)$ and $\den{L}{R} = 2$. 
				Since $\beta=2$, the fourth row of Table~\ref{table2} states that $\beta R\G \subseteq \G$ and $\beta R(\frac{2+\w}{3}+\G) \subseteq \frac{2+\w}{3}+\G$, 
				as can be verified in Figure~\ref{figure3}.
				
				\begin{figure}[!ht]
					\centering
					\includegraphics[width=5 cm]{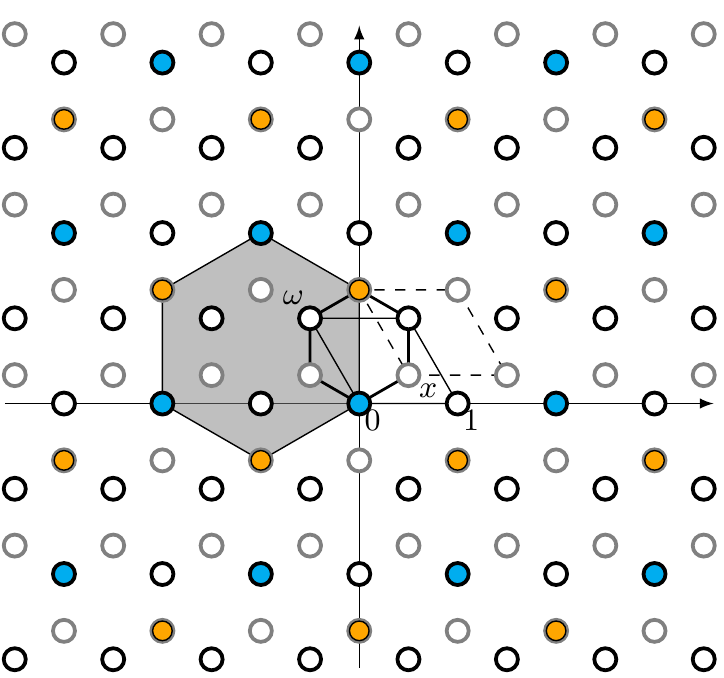}
					\caption{Let $L$ be the hexagonal packing viewed as the union of the hexagonal lattice $\G = \Z[\w]$ (black dots), where $\w=e^{2\pi i/3}$,  
						and its shifted copy $\frac{2+\w}{3}+\G$ (gray dots).
						If $\beta = 2$ and $R$ corresponds to multiplication by $1+\w$ (rotation about the origin by $\frac{\pi}{3}$),  
						then the set $\beta RL$, consisting of the union of $\beta R\G$ (blue dots) and $\beta R(\frac{2+\w}{3}+\G)$ (yellow dots), is a similar subpacking of $L$.}
					\label{figure3}
				\end{figure}	
			\end{ex}
		
	\section{Shifted Point Packings}
		Let us now consider rotations about points different from the origin. 
		Note that rotating a point packing $L \subseteq \R^d$ about a point $-x \in \R^d$ is equivalent to rotating $x+L$ about the origin.
		For instance, consider the hexagonal packing $L = \G \cup (x+\G)$, where $\G = \Z[\w]$ is the hexagonal lattice and $x=\frac{2+\w}{3}$.
		Observe that applying linear isometries to $L$ fixes the origin which is located at a vertex of a hexagon.
		To rotate about a center of a hexagon (a point of maximal symmetry of $L$), 
		say, $-x$, one may first translate all points of $L$ by the vector $x$ and afterwards rotate about the origin.
		This observation leads us to investigate translates of point packings, which will be referred to as \emph{shifted point packings}, and their similarity isometries.
		The same idea was employed in \cite{AGL14} to find the coincidence isometries of point packings about points different from the origin.
		
		If $L = \bigcup_{k=0}^{m-1} (x_k + \G)$ is a point packing generated by a lattice $\G$, and $x \in \R^d$, 
		then the shifted point packing $x+L$ is given by $x+L = \bigcup_{k=0}^{m-1} (x + x_k + \G)$.
		Observe that $x+L$ is still a union of shifted copies of $\G$ but having shift vectors $x+x_k$, that is, shift vectors $x_k$ of $L$ translated by $x$.
		The only difference is that if $x\notin\per(L)$, then the shifted point packing $x+L$ does not include $\G$ (and the origin).
		Hence, the usual definitions and notations regarding similarity isometries of point packings will be applied to shifted point packings.
		Moreover, the results on similarity isometries of point packings in Section~\ref{simpointpack} are applicable to shifted point packings.
		In particular, we use analogous results to Lemma~\ref{component1-lem}, Theorem~\ref{component2}, and Corollaries~\ref{component2c} and~\ref{component2b-cor} 
		to determine $\os(x+L)$ and $\Scal{x+L}{R}$ for a given similarity isometry $R \in \os(x+L)$.
		
		We illustrate these results by looking at the shifted hexagonal packing \[x+L = (\tfrac{2+\w}{3}+\G) \cup (\tfrac{4+2\w}{3}+\G),\] 
		where $L = \G \cup (\frac{2+\w}{3} + \G)$ is the hexagonal packing, $\G = \Z[\w]$, with $\w = e^{2\pi i/3}$, is the hexagonal lattice, and $x=\frac{2+\w}{3}$.
		Let $R \in \os(x+L)$ and $\beta \in \Scal{x+L}{R}$.
		It follows from the analogue of Theorem~\ref{component2} that $R\in\os(\G)$, $\beta\in\scal{\G}{R}$, 
		and $n=[\beta R\G : (\G \cap \beta R\G)]$ is at most $2$.
		Similar to what happened in the hexagonal packing, we conclude that $n=1$ from the analogue of Corollary~\ref{component2c}.
		Thus, $\beta\in\Scal{\G}{R}$ and each component of $\beta R(x+L)$ is contained in a unique component of $x+L$. 
		
		Tables~\ref{table4} and \ref{table5} were obtained by applying analogous results of Corollary~\ref{component2b-cor}.
		The results in Table~\ref{table4} hold for any similarity rotation $R \in \sos(x+L)$, 
		while those in Table~\ref{table5} hold for any similarity reflection $T \in \os(x+L) \setminus \sos(x+L)$. 
		In all cases, $\Scal{x+L}{S} = \den{\G}{S} (1+3\Z) \cup \den{\G}{S} (2+3\Z)$ for any similarity isometry $S$ of $x+L$. 
		The two tables together imply that $\os(x+L) \subsetneq \os(L) = \os(\G)$. 
		One may verify that $\os(x+L)$ forms a group, and hence, is a proper subgroup of $\os(\G)$.
		
		\begin{table}[!ht]
			\centering
			\renewcommand{\arraystretch}{1.3}
			\begin{tabular}{|c|c|c|}
				\hline
				$R: (a+b\w)/|a+b\w|$ & $\Scal{x+L}{R}$ & $\tau (\beta R(x+L))$ \\
				\hline
				\multirow{2}{*}{$a+b \equiv 1 \!\! \pmod 3$} & $\den{\G}{R} (1+3\Z)$ & $\{ (\frac{2+\w}{3},\frac{2+\w}{3}), (\frac{4+2\w}{3},\frac{4+2\w}{3}) \}$ \\
				& $\den{\G}{R} (2+3\Z)$ & $\{ (\frac{2+\w}{3},\frac{4+2\w}{3}), (\frac{4+2\w}{3},\frac{2+\w}{3}) \}$ \\
				\hline 
				\multirow{2}{*}{$a+b \equiv 2 \!\! \pmod 3$} & $\den{\G}{R} (1+3\Z)$ & $\{ (\frac{2+\w}{3},\frac{4+2\w}{3}), (\frac{4+2\w}{3},\frac{2+\w}{3}) \}$ \\
				& $\den{\G}{R} (2+3\Z)$ & $\{ (\frac{2+\w}{3},\frac{2+\w}{3}), (\frac{4+2\w}{3},\frac{4+2\w}{3}) \}$ \\
				\hline 
			\end{tabular}
			\caption{Let $x+L$ be the shifted hexagonal packing where $L$ is the hexagonal packing generated by the hexagonal lattice $\G = \Z[\w]$ 
				with $\w=e^{2\pi i/3}$, and $x=\frac{2+\w}{3}$. 
				The sets $\Scal{x+L}{R}$ and $\tau (\beta R(x+L))$ are given for all similarity rotations $R$ in $\sos(x+L)$.}
			\label{table4}
		\end{table}
		
		\begin{table}[!ht]
			\centering
			\renewcommand{\arraystretch}{1.3}
			\begin{tabular}{|c|c|c|}
				\hline
				$T = RT_r \text{ with } R: (a+b\w)/|a+b\w|$ & $\Scal{x+L}{T}$ & $\tau (\beta T(x+L))$ \\
				\hline
				\multirow{2}{*}{$a+b \equiv 1 \!\! \pmod 3$} & $\den{\G}{R} (1+3\Z)$ & $\{ (\frac{2+\w}{3},\frac{4+2\w}{3}), (\frac{4+2\w}{3},\frac{2+\w}{3}) \}$ \\
				& $\den{\G}{R} (2+3\Z)$ & $\{ (\frac{2+\w}{3},\frac{2+\w}{3}), (\frac{4+2\w}{3},\frac{4+2\w}{3}) \}$ \\
				\hline 
				\multirow{2}{*}{$a+b \equiv 2 \!\! \pmod 3$} & $\den{\G}{R} (1+3\Z)$ & $\{ (\frac{2+\w}{3},\frac{2+\w}{3}), (\frac{4+2\w}{3},\frac{4+2\w}{3}) \}$ \\
				& $\den{\G}{R} (2+3\Z)$ & $\{ (\frac{2+\w}{3},\frac{4+2\w}{3}), (\frac{4+2\w}{3},\frac{2+\w}{3}) \}$ \\
				\hline 
			\end{tabular}
			\caption{Let $x+L$ be the shifted hexagonal packing where $L$ is the hexagonal packing generated by the hexagonal lattice $\G = \Z[\w]$
				with $\w=e^{2\pi i/3}$, and $x=\frac{2+\w}{3}$. 
				The sets $\Scal{x+L}{T}$ and $\tau (\beta T(x+L))$ are given for all similarity reflections $T$ in $\os(x+L) \setminus \sos(x+L)$.}
			\label{table5}
		\end{table}
	
		\begin{ex}\label{ex4}
			Take $\beta = 1$ and $R$ to be the rotation about the origin by $\frac{\pi}{3}$ which corresponds to multiplication by $1+\w$.
			Note that $\den{\G}{R} = 1$, and thus according to Table~\ref{table4}, 
			$R$~is a similarity isometry of $x+L$ with $\Scal{x+L}{R} = (1+3\Z) \ \cup \ (2+3\Z)$ and ${\den{x+L}{R} = 1}$. 
			Since $\beta = 1$, the third row of Table~\ref{table4} states that $\beta R(\frac{2+\w}{3}+\G) \subseteq \frac{4+2\w}{3}+\G$ 
			and $\beta R(\frac{4+2\w}{3}+\G) \subseteq \frac{2+\w}{3}+\G$, as can be verified in Figure~\ref{figure4}.
			In fact, Figure~\ref{figure4} shows that $\beta R(\frac{2+\w}{3}+L) = \frac{2+\w}{3}+L$, 
			since $\beta R(\frac{2+\w}{3}+\G) = \frac{4+2\w}{3}+\G$ and $\beta R(\frac{4+2\w}{3}+\G) = \frac{2+\w}{3}+\G$.
			This is expected since the hexagonal packing is symmetric with respect to the $\frac{\pi}{3}$ rotation about the center of the hexagons.
			
			\begin{figure}[!ht]
				\centering
				\includegraphics[width=5 cm]{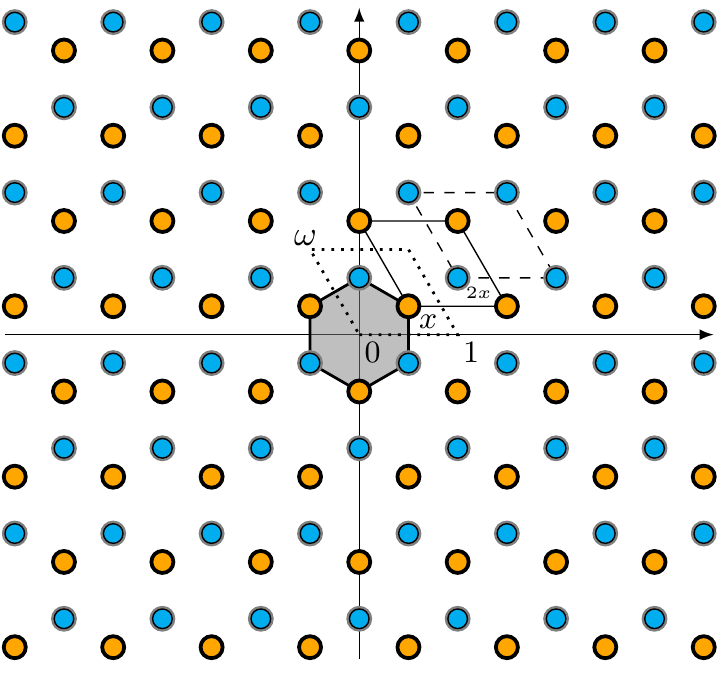}
				\caption{Let $x+L$ be the shifted hexagonal packing given by the union of $\frac{2+\w}{3}+\G$ (black outline) and $\frac{4+2\w}{3}+\G$ (gray outline), 
					where $x = \frac{2+\w}{3}$, $L$ is the hexagonal packing $\G \cup (\frac{2+\w}{3}+\G)$, and $\G$ is the hexagonal lattice $\Z[\w]$ with $\w=e^{2\pi i/3}$.  
					If $\beta = 1$ and $R$ corresponds to multiplication by $1+\w$ (rotation about the origin by $\frac{\pi}{3}$),
					then the set $\beta R(x+L)$, consisting of the union of $\beta R(\frac{2+\w}{3}+\G)$ (blue dots) and $\beta R(\frac{4+2\w}{3}+\G)$ (yellow dots), is a similar subpacking of $x+L$. 
					Here, we see that $\beta R(x+L) = x+L$.}
				\label{figure4}
			\end{figure}
		\end{ex}
			
		We deduce from Tables~\ref{table4} and \ref{table5} that $\os(x+L)$ is a proper subset of $\os(L) = \os(\G)$.
		Consequently, there are similarity isometries of $L$ which are not similarity isometries of $x+L$. 
		In other words, there are more similarity isometries of $L$ when one rotates $L$ about a vertex of a hexagon than when one rotates $L$ about a center of a hexagon.
		This is a surprising result, since the hexagonal packing has more symmetries about the center of a hexagon than about a vertex of a hexagon.
		
		Moreover, observe that if $R \in \os(x+L)$, then the sets of scaling factors $\Scal{x+L}{R}$ and $\Scal{L}{R}$ vary.
		For instance, consider Figures~\ref{figure3} and \ref{figure4}. 
		Even if the rotation $R$ about the origin by $\frac{\pi}{3}$ is a similarity isometry of both $x+L$ and $L$, 
		the sets of scaling factors obtained are different: $\Scal{L}{R} = 3\Z \ \cup \ (2+3\Z)$ and $\Scal{x+L}{R} = (1+3\Z) \ \cup \ (2+3\Z)$. 
		Hence, changing the rotation point (from a vertex to a center of a hexagon) affects not only the set of similarity isometries, but also the set of scaling factors and the set $\tau$ that describes each component of the similar subpacking as a subset of some component of the point packing.
	
	\section{Outlook}
	
		The notions of a similarity isometry of a lattice and a similar sublattice were extended to (crystallographic) point packings. 
		Equivalent conditions were obtained to characterize a similarity isometry of a point packing and its set of scaling factors.
		The approach employed here was to examine the components of the similar subpacking component-wise: 
		each component of the similar subpacking intersects exactly $n$ components of the point packing.
		These results were applied to planar point packings whose generating lattice is the square lattice $\Z[i]$ or the hexagonal lattice $\Z[\w]$.
		Particular planar examples discussed, namely the $1 \times 2$ rectangular lattice and the hexagonal packing, 
		fall under the case $n=1$ where each component of the similar subpacking is contained in a unique component of the point packing. 
		It would be interesting to find more examples where $n>1$, such as Example~\ref{ex-component}, 
		and study specific examples such as point packings associated to Archimedean tilings~\cite{EFL,SY19}. 
		
		The set $\os(L)$ of similarity isometries of a point packing $L$ is a subset of the group of similarity isometries of the generating lattice $\G$ of $L$.
		Moreover, the set $\Scal{L}{R}$ of scaling factors with respect to $L$ is a subset of the 
		set $\scal{\G}{R}$ of scaling factors with respect to~$\G$.
		We have also shown in Appendix~\ref{app} that $\os(L)$ is a monoid under certain assumptions.
		Although the question of whether $\os(L)$ forms a group remains unsolved, 
		we have shown that $\os(L)$ is a group for point packings generated by a square or a hexagonal lattice, all of whose shift vectors have rational components.
		Our initial computations have also shown that point packings generated by a square or a hexagonal lattice with irrational shift vectors 
		have similarity isometries which still form a group.
		
		The framework and the methods presented in this study offer a more concrete approach in studying similarity isometries of ideal crystals that is easy to describe geometrically. 
		Since point packings serve as models for crystals having multiple atoms per primitive unit cell, 
		it would be interesting to investigate similarity isometries of other point packings, particularly three-dimensional ones that correspond to models of actual ideal crystals. 
		Similarity isometries of the diamond packing have already been identified by the authors and will be part of a future work. 
		
		Finally, the notion of a shifted point packing was introduced in the interest of examining similarity isometries of a point packing about points different from the origin.
		This is because rotating a point packing about a point $-x$ is equivalent to rotating the translate of the point packing by $x$ about the origin. 
		In particular, we investigate a shifted hexagonal packing and its similarity isometries that correspond to similarity isometries of the hexagonal packing about a center of a hexagon.
		The shifted hexagonal packing has less similarity isometries than the hexagonal packing.
		This is unexpected since the hexagonal packing has more symmetries about the center of a hexagon than about a vertex of a hexagon.
		It is also notable that the set of similarity isometries of the shifted hexagonal packing forms a proper subgroup of the set of similarity isometries of the hexagonal packing.
	
	\appendix	
	\section{Algebraic Structure of $\os(L)$}\label{app}
	
		We now look at the algebraic structure of $\os(L)$.
		The identity isometry $R=\mathbbm{1}$ is clearly in $\os(L)$.
		The following result shows a sufficient condition for $\os(L)$ to be closed under composition.
		
		\begin{prop}\label{monoid}
			Let $L = \bigcup_{k=0}^{m-1} (x_k + \G)$ be a point packing generated by a lattice $\G \subseteq \R^d$. 
			If $\Scal{L}{R} \subseteq \Scal{\G}{R}$ for all $R \in \os(L)$, then $\os(L)$ is closed under composition. 
		\end{prop}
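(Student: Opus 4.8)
The plan is to show directly that for any $R, S \in \os(L)$ the composition $RS$ again lies in $\os(L)$, by producing an explicit scaling factor and verifying the hypothesis of Corollary~\ref{component2b-cor}. Since $\os(L) \subseteq \os(\G)$ by Theorem~\ref{component2} and $\os(\G)$ is a group, we already know $RS \in \os(\G)$; the real work is to check that $RS$ respects the shift-vector structure of $L$. First I would pick positive scaling factors $\beta_R \in \Scal{L}{R}$ and $\beta_S \in \Scal{L}{S}$, which exist because $R, S \in \os(L)$. The standing hypothesis $\Scal{L}{R} \subseteq \Scal{\G}{R}$ then forces $\beta_R \in \Scal{\G}{R}$ and $\beta_S \in \Scal{\G}{S}$, that is, $\beta_R R\G \subseteq \G$ and $\beta_S S\G \subseteq \G$. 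This is precisely the condition $n = [\beta R\G : (\G \cap \beta R\G)] = 1$ that puts us in the regime of Corollary~\ref{component2b-cor}, so that corollary applies to both $R$ and $S$.

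Next I would set $\beta := \beta_R \beta_S$ and observe that $\beta\, RS\,\G = \beta_R R(\beta_S S\G) \subseteq \beta_R R\G \subseteq \G$, so that $\beta \in \Scal{\G}{RS}$ and $RS$ is eligible for Corollary~\ref{component2b-cor}. It then remains to verify the shift-vector condition: for every $k$ there is an index $l$ with $\beta\, RS\, x_k - x_l \in \G$. To establish this I would chain the conditions for $S$ and $R$. Applying Corollary~\ref{component2b-cor} to $S$, for each $k$ there is a $j$ with $\beta_S S x_k = x_j + \gamma_1$ for some $\gamma_1 \in \G$; applying it to $R$, for this $j$ there is an $l$ with $\beta_R R x_j = x_l + \gamma_2$ for some $\gamma_2 \in \G$. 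Then $\beta\, RS\, x_k = \beta_R R(x_j + \gamma_1) = x_l + \gamma_2 + \beta_R R\gamma_1$, and since $\beta_R R\gamma_1 \in \beta_R R\G \subseteq \G$, we obtain $\beta\, RS\, x_k - x_l \in \G$. Corollary~\ref{component2b-cor} then yields $RS \in \os(L)$ with $\beta \in \Scal{L}{RS}$, which is the desired closure.

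The argument is essentially bookkeeping once the hypothesis is in place, so I do not anticipate a genuine obstacle; the one point demanding care is the role of the assumption $\Scal{L}{R} \subseteq \Scal{\G}{R}$. Without it, a scaling factor of $R$ or $S$ could produce $n > 1$, in which case only the weaker Corollary~\ref{component2c} is available and the clean substitution $\beta_R R\gamma_1 \in \G$ fails, because $\beta_R R\G$ need only lie inside $\frac{1}{n}\G$ rather than $\G$. The hypothesis is exactly what guarantees that every component of each similar subpacking lands inside a single component of $L$, and that is precisely what makes the two shift-vector conditions composable into one for the product.
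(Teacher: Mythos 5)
Your argument is correct and follows essentially the same route as the paper's proof: use the hypothesis to place the scaling factors in $\Scal{\G}{R}$ and $\Scal{\G}{S}$, invoke Corollary~\ref{component2b-cor} for each isometry, chain the two shift-vector conditions via $\beta_R R\gamma_1 \in \beta_R R\G \subseteq \G$, and apply the corollary again to the product. The only cosmetic difference is that you make explicit the verification that $\beta_R\beta_S \in \Scal{\G}{RS}$ and the discussion of why the hypothesis is needed, both of which the paper handles more tersely.
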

		\begin{proof}
			Let $R_1, R_2 \in \os(L)$ and $x_i \in V_L$.
			By Corollary~\ref{component2b-cor}, $R_1 \in \os(\G)$ and there exist $\alpha \in \Scal{\G}{R_1}$ 
			and $x_j \in V_L$ such that $\alpha R_1 x_i - x_j \in \G$.
			Similarly, $R_2 \in \os(\G)$ and there exist $\beta \in\Scal{\G}{R_2}$ and $x_k \in V_L$ such that $\beta R_2 x_j - x_k \in \G$.
			Note that $R_2 R_1$ is an element of the group $\os(\G)$, and $\alpha \beta\in\Scal{\G}{R_2}\Scal{\G}{R_1}\subseteq\Scal{\G}{R_2 R_1}$.
			In addition, $\alpha \beta R_2 R_1 x_i - \beta R_2 x_j \in \beta R_2 \G \subseteq \G$, which implies that $\alpha \beta R_2 R_1 x_i - x_k\in \G$.
			It now follows from Corollary~\ref{component2b-cor} that $R_2R_1\in\os(L)$.
		\end{proof}
		
		The converse of Proposition~\ref{monoid} does not hold, that is, 
		closure of $\os(L)$ under composition does not imply that $\Scal{L}{R} \subseteq \Scal{\G}{R}$ for all $R \in \os(L)$. 
		Indeed, recall from Example~\ref{ex-component} that $L$ is a lattice. Thus, $\os(L)$ is a group and hence is closed under composition.
		However, if $R$ is again the rotation about the origin by $\frac{\pi}{2}$ in the counterclockwise direction, then
		$\Scal{L}{R} = \den{L}{R}\, \Z = \Z$ is not a subset of $\Scal{\G}{R} = \den{\G}{R}\, \Z = 3\Z$. 
		
		Proposition~\ref{monoid} assures us that $\os(L)$ is at least a monoid whenever the assumption $\Scal{L}{R} \subseteq \Scal{\G}{R}$ for all $R \in \os(L)$ holds.
		The closure of $\os(L)$ under inverses, and hence whether $\os(L)$ forms a group, remains unsolved.
		Nonetheless, there are many examples of planar point packings $L$ for which $\os(L)$ is a group, and these were discussed in Section~4.
	
	\section*{Acknowledgements}
		This work was funded by the UP System Enhanced Creative Work and Research Grant (ECWRG 2018-1-007).
		The authors also thank P.~Zeiner for pointing out a critical error in a previous version of this manuscript.
	
	\bibliography{ref}
	\bibliographystyle{amsplain}
\end{document}